\newtheorem{theorem}{Theorem}[section]
\newtheorem{lemma}[theorem]{Lemma}
\newtheorem{proposition}[theorem]{Proposition}
\newtheorem{corollary}[theorem]{Corollary}
\newcommand{\beq}{\begin{equation}}
\newcommand{\eeq}{\end{equation}}
\newcommand{\beqq}{\begin{equation*}}
\newcommand{\eeqq}{\end{equation*}}
\theoremstyle{definition}
\theoremstyle{remark}
\newtheorem{remark}[theorem]{Remark}
\numberwithin{equation}{section}
\begin{document}

\address{Zehua Zhao
\newline \indent Department of Mathematics and Statistics, Beijing Institute of Technology, Beijing, China.
\newline \indent MIIT Key Laboratory of Mathematical Theory and Computation in Information Security, Beijing, China.}
\email{zzh@bit.edu.cn}

\title[Many body Schr\"odinger equation on waveguide manifolds]{On Strichartz estimate for many body Schr\"odinger equations in the waveguide setting}
\author{Zehua Zhao}
\maketitle

\setcounter{tocdepth}{1}
\tableofcontents

\begin{abstract}
In this short paper, we prove Strichartz estimates for N-body Schr\"odinger
equations in the waveguide manifold setting (i.e. on semiperiodic spaces $\mathbb{R}^m\times \mathbb{T}^n$ where $m\geq 3$), provided that interaction potentials are small enough (depending on the number of the particles and the universal constants, not on the initial data). The proof combines both the ideas of Tzvetkov-Visciglia \cite{TV1} and Hong \cite{hong2017strichartz}. As an immediate application, the scattering asymptotics for this model is also obtained. This result extends Hong \cite{hong2017strichartz} to the waveguide case. 
\end{abstract}
\bigskip

\noindent \textbf{Keywords}: Strichartz estimate, many body Schr\"odinger equations, scattering, waveguide manifolds
\bigskip

\noindent \textbf{Mathematics Subject Classification (2020)} Primary: 35Q55; Secondary: 35R01, 37K06, 37L50.

\section{introduction}
\subsection{Background and Motivations}
Let $d=m+n$, $m \geq 3$, $n \geq 1$ and $N \geq 1$. We consider the many body Schr\"odinger equations in the waveguide setting as follows,
\begin{equation}\label{maineq}
(i \partial_t+H_N)u(t,x_1,...x_N)=0, \quad u(0,x_1,...x_N)=u_0(x_1,...x_N) \in L^2_{x_1,...x_N} ,  
\end{equation}
where $H_N=\Delta_x-V_N=\sum_{\alpha=1}^{N} \Delta_{x_{\alpha}}-\sum_{1\leq \alpha<\beta \leq N}V(x_{\alpha}-x_{\beta})$, \\
and $\alpha$-th particle $x_{\alpha} \in \mathbb{R}^{m}\times \mathbb{T}^{n}$ for any $\alpha \in [1,...,N]$.

From physical explanations, $N \geq 1$ indicates the number of particles in a quantum system (which is often very large) and the interacting potentials of form $V(x_{\alpha}-x_{\beta})$ indicates the interactions of any two particles, which depends on their relative distance. Moreover, the product spaces of form $ \mathbb{R}^{m}\times \mathbb{T}^{n}$ is known as semi-periodic space or waveguide manifold. $d=m+n$ is the whole dimension while $m$ is the dimension for the Euclidean component and $n$ is the dimension for the tori component. 

When $N=1$, initial value problem\eqref{maineq} is exactly the standard nonlinear Schr\"odinger equation (NLS) with a potential, which has been well studied (in the Euclidean case, i.e. replacing $ \mathbb{R}^{m}\times \mathbb{T}^{n}$ by $ \mathbb{R}^{d}$). It is also known as `the one-body case' and the research on the decay properties has a long history (see the Introduction of \cite{hong2017strichartz}, the survey \cite{schlag2005dispersive} and the references therein). In this paper, we mainly concern the general case ($N\geq 1$ can be arbitrarily large), i.e. the many body Schr\"odinger case, which will involve some new difficulties than the single body case such as the issue of interacting potentials.  

The purpose of this paper is to investigate time decay properties of solutions to
the $N$-body Schr\"odinger equation \eqref{maineq} in the waveguide setting. In particular, we discuss the Strichartz-type estimate and the scattering behavior for \eqref{maineq}. We note that the Euclidean case of \eqref{maineq} has been studied in \cite{hong2017strichartz}. (See also \cite{chong2021global} for a recently result which deals with the two body case via the scheme of \cite{keel1998endpoint}.) We intend to generalize \cite{hong2017strichartz} to the waveguide case. That is one main motivation of this paper. Another motivation is the recent developments for the topic: `Long time behavior for NLS on waveguides' so the author is interested in combining both of `waveguides' and `many body Schr\"odinger equations' together, i.e. studying the estimates and the long time behavior for many body Schr\"odinger equations on waveguides. We will briefly mention the background  for `NLS on waveguides' in the next paragraph.

Waveguide manifolds of form $ \mathbb{R}^{m}\times \mathbb{T}^{n}$ are of particular interest in nonlinear optics of telecommunications. Generally, well-posedness theory and long time behavior of NLS are hot topics in the area of dispersive equations and have been studied widely in recent decades. Naturally, the Euclidean case is first treated and the theory, at least in the defocusing setting, has been well established. We refer to \cite{Iteam1,BD3,KM1} for some typical  Euclidean results. Moreover, we refer to \cite{CZZ,CGZ,HP,HTT1,HTT2,IPT3,IPRT3,KV1,yang2023scattering,YYZ,Z1,Z2,ZhaoZheng} with regard to the torus and waveguide settings. (See also \cite{LYYZ,sire2022scattering,yu2021global} for other dispersive equations on waveguides.) One may roughly think of the waveguide case as the ``intermediate point" between the Euclidean case and the torus case since the waveguide manifold is a product of Euclidean spaces and the tori. The techniques used in Euclidean and torus settings are frequently combined and applied to the waveguides problems. At last, we refer to \cite{Cbook,dodson2019defocusing,Taobook} for some classical textbooks on the study of NLS.

Since the current paper concerns the estimates and the PDE-level of \eqref{maineq} rather than the mathematical physics level, we will not mention too much for the background of the many body problems/equations from physical perspectives. We refer to the Introductions of \cite{chen2017global,chong2022global,chong2021global,chong2019dynamical,grillakis2013pair,sigal1987n} and the references therein for more information.

To the authors' best knowledge, the current paper is the first result towards understanding long time dynamics for the many body Schr\"odinger equations within the context of waveguides.

 As last, we note that, as in \cite{hong2017strichartz}, we need to assume some smallness for the potential $V$ and this smallness does not depend on the initial data (only depends on the particle number $N$ and the universal constant). 
\subsection{The statement of main results}
Now we are ready to state the two main results of this paper. We start with the Strichartz estimate as follows since the other one is an application of it.
\begin{theorem}[Strichartz estimate]\label{thm1}
Let $m \geq 3$, $n \geq 1$ and $1<p<2$. We also recall $N$ and $H_N$ in \eqref{maineq}. There exists a small number $\epsilon$ such that if $\|V\|_{L_y^{\frac{d}{2},\infty}L^2_z} \leq \frac{\epsilon}{N^2}$, then 
\begin{equation}
\|\mathbf{1}_{[0,+\infty)} e^{-it H_N} u_0 \|_{V^p_{\Delta_x}} \lesssim \|u_0\|_{L^2_x}.
\end{equation}
\end{theorem}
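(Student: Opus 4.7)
The plan is to treat $H_N = \Delta_x - V_N$ as a perturbation of the free Laplacian $\Delta_x$ via Duhamel's formula and then exploit the smallness of the total pair potential $V_N$ to absorb the resulting error on the left-hand side of the $V^p_{\Delta_x}$ estimate. This mirrors Hong's Euclidean perturbation strategy in \cite{hong2017strichartz}, but every Strichartz and local-smoothing input must be replaced by its waveguide analogue in the spirit of Tzvetkov--Visciglia \cite{TV1}. Setting $u(t) = e^{-itH_N}u_0$ and expanding,
\[
u(t) = e^{it\Delta_x}u_0 - i \sum_{1 \le \alpha<\beta \le N}\int_0^t e^{i(t-s)\Delta_x} V(x_\alpha - x_\beta)\, u(s)\, ds,
\]
after taking $\|\mathbf{1}_{[0,\infty)}\cdot\|_{V^p_{\Delta_x}}$ the linear term is immediately controlled by $\|u_0\|_{L^2_x}$ from the very definition of the $V^p$ space adapted to the Schr\"odinger flow.

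The main technical input is a single-pair inhomogeneous retarded estimate of the form
\[
\Bigl\|\mathbf{1}_{[0,\infty)}\int_0^t e^{i(t-s)\Delta_x} V(x_\alpha - x_\beta)\, \phi(s)\, ds\Bigr\|_{V^p_{\Delta_x}} \lesssim \|V\|_{L^{d/2,\infty}_y L^2_z}\, \|\phi\|_{V^p_{\Delta_x}}.
\]
I would derive this by viewing the waveguide Schr\"odinger flow as an $L^2_z$-valued Euclidean flow along the lines of Tzvetkov--Visciglia: Strichartz estimates on $\mathbb{R}^m$ drive the Euclidean direction while the torus component is swept through the $L^2_z$ slot by Cauchy--Schwarz together with Sobolev embedding on $\mathbb{T}^n$ and the assumed $L^2_z$ regularity of $V$. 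The Euclidean part is then handled exactly as in Hong: the mixed Lorentz norm $L^{d/2,\infty}_y L^2_z$ absorbs the Riesz-type kernel produced by two applications of the inhomogeneous Strichartz estimate via Kato's Hardy--Littlewood--Sobolev lemma. Translation invariance in the unpaired particle coordinates reduces the estimate for $V(x_\alpha - x_\beta)$ to a scalar single-pair estimate.

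Summing over the $\binom{N}{2}$ pairs, the Duhamel contribution is bounded by $C N^2 \|V\|_{L^{d/2,\infty}_y L^2_z}\|u\|_{V^p_{\Delta_x}}$, and choosing $\epsilon$ with $C\epsilon < 1/2$ allows the hypothesis $\|V\|_{L^{d/2,\infty}_y L^2_z} \leq \epsilon/N^2$ to absorb this term into the left-hand side, yielding the claimed bound. The main obstacle is precisely the single-pair inhomogeneous estimate: the waveguide substitute for Kato's weighted Strichartz estimate must accommodate the fact that the torus variable admits no scale invariance and no genuine local smoothing. Balancing the strong Lorentz norm $L^{d/2,\infty}_y$ on the Euclidean side against the mere $L^2_z$ norm on the torus side, so that Hong's perturbation scheme transplants cleanly to the waveguide manifold, is the central technical point to resolve.
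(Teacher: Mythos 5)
Your top-level scheme coincides with the paper's: Duhamel expansion around $e^{it\Delta_x}$, the identity $\|\mathbf{1}_{[0,\infty)}e^{it\Delta_x}u_0\|_{V^p_{\Delta_x}}=\|u_0\|_{L^2_x}$ for the linear term, a single-pair estimate
$\|\mathbf{1}_{[0,\infty)}\int_0^t e^{i(t-s)\Delta_x}V(x_\alpha-x_\beta)u(s)\,ds\|_{V^p_{\Delta_x}}\lesssim\|V\|_{L^{d/2,\infty}_yL^2_z}\|u\|_{V^p_{\Delta_x}}$ (the paper's Proposition \ref{key}), summation over the $N(N-1)/2$ pairs, and absorption via the hypothesis $\|V\|\le\epsilon/N^2$. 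The gap is in how you propose to prove the single-pair estimate, in two places. First, ``translation invariance in the unpaired particle coordinates'' does not reduce $V(x_\alpha-x_\beta)$ to a one-body potential. The paper's mechanism is the orthogonal rotation $\mathcal{R}_{\alpha\beta}:(x_\alpha,x_\beta)\mapsto(\frac{x_\alpha-x_\beta}{\sqrt2},\frac{x_\alpha+x_\beta}{\sqrt2})$, which converts the pair potential into $V(\sqrt2\,x_\alpha)$, commutes with $e^{it\Delta_x}$, and leaves the $V^p_{\Delta_x}$-norm invariant. This rotation invariance is the entire reason the theorem is formulated in $V^p_{\Delta_x}$ rather than directly in the anisotropic Strichartz norms $L^q_tL^{r,2}_{y_\alpha}L^2_{z_\alpha}L^2_{\hat x_\alpha}$: those norms are \emph{not} preserved by $\mathcal{R}_{\alpha\beta}$, and without rotating first the H\"older step fails, because one cannot pull $\|V\|_{L^{m/2,\infty}_{y}}$ out past the inner $L^2_{\hat x_\alpha}$ integration (Minkowski runs the wrong way, since $r>2$). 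Your argument as written never introduces this rotation, so it does not close once the pair structure of the potential is taken seriously.

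Second, the mechanism you invoke for the inhomogeneous bound --- a Riesz-type kernel from two applications of inhomogeneous Strichartz controlled by a Kato/Hardy--Littlewood--Sobolev lemma --- is not how the $V^p\to V^p$ estimate is obtained, and it is not clear it can be, since the relevant estimate is a double-endpoint one. The paper (following Hong) argues by $U^{p'}$--$V^p$ duality: test $\tilde w(t)=w(-t)$ against a $U^{p'}$-atom $a=\sum_k\mathbf{1}_{(s_{k-1},s_k)}\phi_{k-1}$, apply on each subinterval the frozen-variable endpoint Strichartz estimate of Proposition \ref{Stri} (in the rotated frame) together with the transference principle of Lemma \ref{trans}, then conclude with $\ell^2\hookrightarrow\ell^{p'}$ and the superadditivity $\sum_k\|\mathbf{1}_{[s_{k-1},s_k)}u\|^p_{V^p_{\Delta_x}}\le\|u\|^p_{V^p_{\Delta_x}}$. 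This is also where the hypothesis $1<p<2$ is actually used, which your sketch never touches. The waveguide adaptation itself (freezing $z_\alpha$ and $\hat x_\alpha$ in $L^2$ and using only the $m$-dimensional dispersion in $y_\alpha$) is correctly anticipated in your proposal, but the Sobolev embedding on $\mathbb{T}^n$ you mention plays no role and would not help for a potential that is merely $L^2_z$.
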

\begin{remark}
Here $V^p_{\Delta_x}$-norm (known as variation spaces) is introduced by Koch-Tataru \cite{koch2007priori} (see Section \ref{pre} for discussions). See 
 also \cite{HTT1,HTT2,IPT3,IPRT3} for more information and some other applications.  
\end{remark}
In viewing of the properties of $V^p_{\Delta_x}$-type spaces, Theorem \ref{thm1} directly implies
\begin{corollary}\label{cor1}
Let $m \geq 3$ and $n \geq 1$. There exists a small number $\epsilon$ such that if $\|V\|_{L_y^{\frac{d}{2},\infty}L^2_z} \leq \frac{\epsilon}{N^2}$, then for any
$m
$-dimensional admissible pair $(q,r)$ and $1 \leq  \alpha \leq N$, we have
\begin{equation}\label{Stri1}
\| e^{-it H_N} u_0 \|_{L^q_tL^r_{y_{\alpha}}L^2_{z_{\alpha}}L^2_{\hat{x}_{\alpha}}} \lesssim \|u_0\|_{L^2_x},
\end{equation}
where $\hat{x}_{\alpha}$ is the $N-1$ spatial variables except the $\alpha$-th variable $x_{\alpha}$, i.e.,
\begin{equation}
   \hat{x}_{\alpha}=(x_1,...x_{\alpha-1},x_{\alpha+1}...,x_N) \in (\mathbb{R}^m\times \mathbb{T}^n)^{(N-1)},
\end{equation}
and $x_{\alpha}$ is the $\alpha$-th variable with Euclidean component $y_{\alpha}$ and tori component $z_{\alpha}$, i.e.,
\begin{equation}
   x_{\alpha}=(y_{\alpha},z_{\alpha})\in \mathbb{R}^m\times \mathbb{T}^n.
\end{equation}
Moreover, for any $mN$-dimensional admissible pair $(q,r)$, we have
\begin{equation}\label{Stri2}
\| e^{-it H_N} u_0 \|_{L^q_tL^r_{y}L^2_{z}} \lesssim \|u_0\|_{L^2_x},
\end{equation}
where $y$ is for the whole Euclidean component ($mN$-dimensional) and $z$ is for the whole tori component $nN$-dimensional.
\end{corollary}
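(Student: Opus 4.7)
The plan is to deduce Corollary~\ref{cor1} from Theorem~\ref{thm1} by combining it with the standard embedding of the variation space $V^p_{\Delta_x}$ into Strichartz-type $L^q_tL^r$-norms built on the free flow $e^{it\Delta_x}$. Once the free Strichartz counterparts of \eqref{Stri1} and \eqref{Stri2} are available on $(\mathbb{R}^m\times\mathbb{T}^n)^N$, passage to the perturbed propagator $e^{-itH_N}$ is essentially routine through the variation-space machinery.

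For the free version of \eqref{Stri2} I would perform a Fourier series expansion in the tori variable $z\in\mathbb{T}^{nN}$, apply Plancherel in $z$ to rewrite $\|e^{it\Delta_x}u_0(\cdot,z)\|_{L^2_z}$ as an $\ell^2_k$-norm of free Euclidean Schr\"odinger flows $e^{it\Delta_y}\hat u_{0,k}$ on $\mathbb{R}^{mN}$ (each multiplied by the unimodular phase $e^{-it|k|^2}$), pull the $\ell^2_k$-norm outside of $L^q_tL^r_y$ via Minkowski's inequality (valid since $q,r\geq 2$ by admissibility), and invoke the classical Euclidean Strichartz estimate on $\mathbb{R}^{mN}$ for each tori frequency $k$. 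For the free version of \eqref{Stri1} the same Plancherel argument applies, now performed simultaneously on the $\hat x_\alpha$-variables and on $z_\alpha$; the operators $e^{it\Delta_{x_\beta}}$ with $\beta\neq\alpha$ contribute only unimodular phases after Plancherel, so the estimate reduces to the single-particle Euclidean Strichartz for $e^{it\Delta_{y_\alpha}}$ on $\mathbb{R}^m$.

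Next, I would use the $V^p$/$U^p$ machinery of Koch--Tataru recalled in Section~\ref{pre} to transfer these free estimates to $V^p_{\Delta_x}$. For any $U^p_{\Delta_x}$-atom
\begin{equation*}
a(t,x)=\sum_k\mathbf{1}_{[t_k,t_{k+1})}(t)\,e^{i(t-t_k)\Delta_x}\phi_k(x),\qquad \sum_k\|\phi_k\|_{L^2}^p\leq 1,
\end{equation*}
the disjoint-in-time structure together with the free Strichartz estimate and the embedding $\ell^p\hookrightarrow\ell^q$ (available because the admissible pair satisfies $q\geq 2>p$) produce a uniform $L^q_tL^r$-type bound for $a$; summing atomic decompositions and using the standard embedding $V^p_{\Delta_x}\hookrightarrow U^q_{\Delta_x}$ for $p<q$ then yields
\begin{equation*}
\|u\|_{L^q_tL^r_{y_\alpha}L^2_{z_\alpha}L^2_{\hat x_\alpha}}+\|u\|_{L^q_tL^r_yL^2_z}\lesssim\|u\|_{V^p_{\Delta_x}}.
\end{equation*}
Applying this to $u=\mathbf{1}_{[0,\infty)}e^{-itH_N}u_0$ together with Theorem~\ref{thm1} gives the estimates on $[0,\infty)$, and the extension to $(-\infty,0]$ follows from the time-reversal symmetry of the self-adjoint operator $H_N$.

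The only delicate point in this plan is the bookkeeping of mixed-norm Minkowski inequalities needed to arrange the $L^2_z$, $L^2_{\hat x_\alpha}$, and Strichartz $L^q_tL^r_y$ factors in the usable order; since every swap here exploits the admissibility constraint $q,r\geq 2$, all interchanges go in the right direction and no genuine obstacle arises.
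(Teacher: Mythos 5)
Your proposal is correct and follows essentially the same route as the paper: free Strichartz estimates with the tori and spectator variables frozen in $L^2$, upgraded to the bound $\|u\|_{L^q_tL^r_{y_\alpha}L^2_{z_\alpha}L^2_{\hat x_\alpha}}\lesssim\|u\|_{V^p_{\Delta_x}}$ via the transference principle (Lemma \ref{trans}, which the paper cites from Hong and you re-derive through $U^p$-atoms and the embedding $V^p\hookrightarrow U^q$), and then combined with Theorem \ref{thm1}. The only cosmetic difference is that you obtain the free frozen-variable estimates by Plancherel in the frozen variables plus Minkowski and Euclidean Strichartz, whereas the paper's Proposition \ref{Stri} uses unitarity, the $m$-dimensional dispersive estimate and the Keel--Tao abstract theorem; both are standard and equivalent here.
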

\begin{remark}
 See Theorem 1.1 and 
 Theorem 1.2 in \cite{hong2017strichartz} for the Euclidean case. We will give the proof for Corollary \ref{cor1} after the proof of Theorem \ref{thm1} in the end of Section \ref{3}.   
\end{remark}

\begin{remark}
As shown above, the formulation of the Strichartz estimates for \eqref{maineq} combines both the ideas of \cite{TV1} and \cite{hong2017strichartz}. As in \cite{TV1} (see also \cite{TV2}), we fix the tori component by using $L^2$-norm. (In other words, one decomposes the function along the tori direction and derive the Strichartz estimate using the dispersion from the Euclidean direction.) As in \cite{hong2017strichartz}, we fix other particles by only considering the dispersion of one certain particle. Thus, we consider the dispersion of the Euclidean component of one particle; fixing other particles and the tori component of this particle by using $L^2$-norm.
\end{remark}

As a direct application of Theorem \ref{thm1}, we show the scattering behavior for an $N$-body Schr\"odinger operator with
rough small interactions in the following sense,
\begin{theorem}[Scattering]\label{thm2}
Let $m \geq 3$, $n \geq 1$ and $1<p<2$. let $\epsilon$ be a small constant given in
Theorem \ref{thm1}. If $\|V\|_{L_y^{\frac{m}{2},\infty}L^2_z} \leq \frac{\epsilon}{N^2}$, then for each $u_0 \in  L^2_x$, there exist scattering states
$u_{\pm}$ such that
\begin{equation}
\lim_{t\rightarrow \pm \infty} \big\| e^{-itH_N}u_0-e^{it\Delta_x}u_{\pm} \big\|_{L^2_x}=0.    
\end{equation}
\end{theorem}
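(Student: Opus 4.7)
The plan is to deduce the scattering statement directly from Theorem \ref{thm1} via the structural properties of the variation space $V^p_{\Delta_x}$. Recall that, by definition, $u \in V^p_{\Delta_x}$ means the $L^2_x$-valued curve $t \mapsto e^{-it\Delta_x} u(t)$ is of bounded $p$-variation. Any such curve admits one-sided limits at every extended real value, in particular at $\pm\infty$, and this is precisely the mechanism that will produce the scattering states.

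Concretely, Theorem \ref{thm1} asserts that $\mathbf{1}_{[0,+\infty)} e^{-itH_N}u_0 \in V^p_{\Delta_x}$ with norm $\lesssim \|u_0\|_{L^2_x}$. Consequently $e^{-it\Delta_x}\bigl(e^{-itH_N}u_0\bigr)$ has bounded $p$-variation on $[0,+\infty)$ as an $L^2_x$-valued curve, and therefore converges in $L^2_x$ to some limit, which we take to be $u_+$. Since $e^{-it\Delta_x}$ is an $L^2_x$-isometry, this gives
\[
\bigl\| e^{-itH_N}u_0 - e^{it\Delta_x} u_+\bigr\|_{L^2_x} = \bigl\| e^{-it\Delta_x}e^{-itH_N}u_0 - u_+\bigr\|_{L^2_x} \longrightarrow 0 \quad \text{as } t \to +\infty.
\]
The scattering state $u_-$ at $-\infty$ is then obtained by the same argument after using the time-reversal symmetry of \eqref{maineq} (the potential $V$ is real-valued) to transfer Theorem \ref{thm1} to the negative half-line.

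The only genuinely nontrivial ingredient is the "$V^p \Rightarrow$ existence of $L^2_x$-limit at $\pm\infty$" step, which is a classical consequence of the Koch--Tataru framework already exploited in \cite{HTT1, HTT2, IPT3, IPRT3}; I expect this to be the main item to justify cleanly, but it is essentially standard. As a sanity check (and an alternative route that avoids variation spaces), one could instead run a direct Duhamel argument: write
\[
e^{-it\Delta_x}\bigl(e^{-itH_N}u_0\bigr) = u_0 - i \int_0^t e^{-is\Delta_x} V_N e^{-isH_N}u_0\, ds,
\]
and show the integral is Cauchy in $L^2_x$ as $t \to \infty$ by combining the dual $m$-admissible Strichartz estimate from Corollary \ref{cor1} with a Lorentz--H\"older bound of the type $\|V(y_\alpha - y_\beta)w\|_{L^{r'}_y L^2_z} \lesssim \|V\|_{L^{m/2,\infty}_y L^2_z}\|w\|_{L^{r,2}_y L^2_z}$, summing the $\binom{N}{2} \sim N^2$ pairwise interactions, and absorbing the factor $N^2$ into the smallness hypothesis $\|V\|_{L^{m/2,\infty}_y L^2_z} \leq \epsilon/N^2$. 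Either route yields the theorem essentially for free once Theorem \ref{thm1} is in hand.
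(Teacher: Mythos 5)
Your primary argument is correct but takes a genuinely different route from the paper. The paper proves scattering the ``hard'' way: it writes the difference $e^{-it_2\Delta_x}e^{-it_2H_N}u_0-e^{-it_1\Delta_x}e^{-it_1H_N}u_0$ via Duhamel, conjugates each interaction term by the rotation $\mathcal{R}_{\alpha\beta}$ so that $V(x_\alpha-x_\beta)$ becomes $V(\sqrt{2}\,x_\alpha)$, applies the dual Strichartz estimate of Proposition \ref{Stri} together with H\"older in Lorentz spaces, and concludes by the Cauchy criterion because the global-in-time Strichartz norm of the rotated flow is finite (by Theorem \ref{thm1}, rotation invariance of $V^p_{\Delta_x}$, and the transference principle). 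You instead read the scattering state directly off the definition of $V^p_{\Delta_x}$: finite $p$-variation of $t\mapsto e^{-it\Delta_x}u(t)$ forces this curve to be Cauchy at $+\infty$ (otherwise one builds a partition with infinitely many increments of size $\geq\epsilon$), so the limit $u_+$ exists; this is clean, rigorous, and indeed the standard mechanism in the Koch--Tataru/Hadac--Herr--Koch framework, where existence of one-sided limits at $\pm\infty$ is either built into the definition of $V^p$ or an elementary consequence of it. Your route buys brevity and avoids re-estimating the potential terms; the paper's route has the minor virtue of exhibiting $u_\pm$ via an explicit convergent Duhamel integral. Two small cautions on your ``sanity check'' alternative, which is essentially the paper's proof: you cannot apply the Lorentz--H\"older step to $V(x_\alpha-x_\beta)w$ in the $y_\alpha$-variable alone, since $V(x_\alpha-x_\beta)$ also depends on $x_\beta$, which sits inside the $L^2_{\hat{x}_\alpha}$ layer --- the rotation $\mathcal{R}_{\alpha\beta}$ is what makes this step legitimate; and the resulting Strichartz norm is that of the \emph{rotated} flow $\mathcal{R}_{\alpha\beta}e^{-isH_N}u_0$, which is controlled via Theorem \ref{thm1} plus transference rather than by Corollary \ref{cor1} applied to $e^{-isH_N}u_0$ itself. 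Finally, your appeal to time-reversal for $u_-$ does require $V$ real-valued (as does the paper's implicit ``without loss of generality''), so it is worth stating that hypothesis.
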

\begin{remark}
We note that for the tori case of \eqref{maineq}, the scattering behavior is not expected due to the lack of dispersion, though a Strichartz estimate can still be possibly obtained with suitable modifications. We leave it for interested readers. 
\end{remark}
\begin{remark}
For the above results, the dimension of the tori component $n \geq 1$ does not matter. (When $n=0$, it is exactly the Euclidean case \cite{hong2017strichartz}). However, if one considers the long time dynamics of a nonlinear problem on waveguide manifolds, the dimension of the tori component often matters a lot. In general, the difficulty of the critical NLS problem on waveguide manifolds increases if the whole dimension is increased or if the Euclidean component is decreased. See the Introductions in \cite{IPT3,IPRT3} for more information.
\end{remark}

\begin{remark}
To be more general, the
tori component $\mathbb{T}^n$ in \eqref{maineq} can be generalized to a compact Riemannian manifold $\mathcal{M}$ such that Theorem \ref{thm1}, Corollary \ref{cor1} and Theorem \ref{thm2} still hold.
\end{remark}
Next, we briefly introduce the main strategy of the proofs for Theorem \ref{thm1}, Corollary \ref{cor1} and Theorem \ref{thm2}. In fact, the proofs for Corollary \ref{cor1} and Theorem \ref{thm2} are standard and less complicated. Corollary \ref{cor1} follows from Theorem \ref{thm1} (see Section \ref{3}) according to the transfer principle of the function space $V^p_{\Delta_x}$. Theorem \ref{thm2} also follows from Theorem \ref{thm1} (see Section \ref{4}), together with some other basic estimates like in \cite{hong2017strichartz}. Thus we will focus on the proof of Theorem \ref{thm1} as follows.

The proof of Theorem \ref{thm1} (Strichartz estimate) is based on the properties of function space $V^p_{\Delta_x}$ and a perturbation method (see Section \ref{3}). The main idea is: one establishes nonlinear estimate for one arbitrary interacting potential (treating it as a perturbation) and then sum them up. The key estimate is Proposition \ref{key} which deals with one arbitrary interacting potential by regarding it as a forcing term. With the help of it, one can handle all of the interacting potentials by treating them as perturbations. Eventually, according to the smallness assumption, one can use perturbation method to show the Strichartz estimate as desired. Compared with the single potential case ($N=1$), the interacting potentials (involves rotations) cause difficulties thus the `rotation flexible' function space $V^p_{\Delta_x}$ is needed; compared with the Euclidean analogue (\cite{hong2017strichartz}), the new difference is the appearance of the tori component.  
\subsection{Structure of this paper}
The rest of the article is organized as follows. In Section \ref{pre}, we discuss function spaces and some estimates for this model; in Section \ref{3}, we give the proof for Theorem \ref{thm1} (Strichartz estimate); in Section \ref{4}, we give the proof for Theorem \ref{thm2} (scattering asymptotics); in Section \ref{5}, we give a few further remarks on this research line.
\subsection{Notations}
We write $A \lesssim B$ to say that there is a constant $C$ such that $A\leq CB$. We use $A \simeq B$ when $A \lesssim B \lesssim A $. Particularly, we write $A \lesssim_u B$ to express that $A\leq C(u)B$ for some constant $C(u)$ depending on $u$. We use $C$ for universal constants and $N$ for the number of particles.

We say that the pair $(p,q)$ is $d$-(Strichartz) admissible if
\begin{equation}
    \frac{2}{p}+\frac{d}{q}=\frac{d}{2},\quad 2 \leq p,q \leq \infty \quad (p,q,d)\neq (2,\infty,2).
\end{equation}
Throughout this paper, we regularly refer to the spacetime norms
\begin{equation}
    \|u\|_{L^p_tL^q_z(I_t \times \mathbb{R}^m\times \mathbb{T}^n)}=\left(\int_{I_t}\left(\int_{\mathbb{R}^m\times \mathbb{T}^n} |u(t,z)|^q dz \right)^{\frac{p}{q}} dt\right)^{\frac{1}{p}}.
\end{equation}
Similarly we can define the composition of three $L^p$-type norms like $L^p_tL^q_xL^2_y$. As in Theorems \ref{thm1}, \ref{thm2} and Corollary \ref{cor1}, we use $L^{r,s}$ for the Lorentz norm (see \cite{bergh2012interpolation}). One can define the composition of norms in a similar way.

As stated in the above Theorems, in general, we \textbf{refer to} $x$ for the whole spatial variable; $y$ for the whole Euclidean spatial variable; $z$ for the whole tori spatial variable; $x_{\alpha}$ for the $\alpha$-th spatial variable; $y_{\alpha}$ for the $\alpha$-th Euclidean spatial variable; $z_{\alpha}$ for the $\alpha$-th tori spatial variable for convenience.

Similar to the Euclidean case, function spaces such as $V^p_{\Delta}$ are also tightly involved. we will discuss them in Section \ref{pre}. (See also \cite{hong2017strichartz}.) 

To deal with the interacting potentials, we define the rotation operator $\mathcal{R}_{\alpha \beta}$ by
\begin{equation}
\mathcal{R}_{\alpha \beta}(f(x_1,...x_{\alpha-1},\frac{x_{\alpha}-x_{\beta}}{\sqrt{2}},x_{\alpha+1}...x_{\beta-1},\frac{x_{\alpha}+x_{\beta}}{\sqrt{2}},x_{\beta+1},...x_N))=f(x_1,...x_N).    
\end{equation}
\subsection*{Acknowledgment} The author was supported by the NSF grant of China (No. 12101046, 12271032) and the Beijing Institute of Technology Research Fund Program for Young Scholars. The author has learned many body Schr\"odinger model and related background during his postdoc career at University of Maryland (2019-2021). Thus he highly appreciates Prof. M. Grillakis, Prof. M. Machedon and their group (Dr. J. Chong and Dr. X. Huang) for related discussions, especially the paper of Hong \cite{hong2017strichartz}.
\section{Preliminaries}\label{pre}
In this section, we discuss function spaces and some estimates for the model \eqref{maineq}. See Section 2 to Section 4 in \cite{hong2017strichartz} for the Euclidean analogue.

First, similar to the Euclidean case, one can easily show: if the potential $V$ is small enough, then the Strichartz estimate for operator $e^{it(\Delta_x-V)}$ also holds for the waveguide case as follows.
\begin{lemma}\label{single}
Let $m \geq 3$, $n \geq 1$, and let $c_0$ be the
implicit constant given in Proposition \ref{Strichartzwaveguide}. If $\|V\|_{L_y^{\frac{d}{2},\infty}L^2_z} < \frac{1}{c_0}$, then
\begin{equation}
\|e^{it(\Delta_x-V)} u_0\|_{L_t^q L_y^r L^2_z(\mathbb{R}\times \mathbb{R}^m \times \mathbb{T}^n)} \leq \frac{c_0}{1-c_0 \|V\|_{L_y^{\frac{d}{2},\infty}L^2_z}} \|u_0\|_{L^2_{y,z}( \mathbb{R}^m \times \mathbb{T}^n)},    
\end{equation}
for all $m$-admissible pair $(q,r)$.
\end{lemma}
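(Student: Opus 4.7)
The plan is the standard Neumann–series / bootstrap argument, adapted from the Euclidean analogue of Hong \cite{hong2017strichartz} with the unperturbed Strichartz estimate (Proposition \ref{Strichartzwaveguide}) playing the role of the Euclidean one. The key point is that $V$ is small in an appropriate Lorentz-type space, so treating it as a perturbation via Duhamel closes.

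First I would fix an $m$-admissible pair $(q,r)$ and write Duhamel's formula with $Vu$ as forcing,
\[
e^{it(\Delta_x - V)} u_0 \;=\; e^{it\Delta_x} u_0 \;-\; i \int_0^t e^{i(t-s)\Delta_x} \bigl( V \cdot e^{is(\Delta_x - V)} u_0 \bigr) \, ds,
\]
and take the norm $X := L^q_t L^r_y L^2_z$ on both sides. The homogeneous term is controlled by $c_0 \|u_0\|_{L^2_{y,z}}$ directly by Proposition \ref{Strichartzwaveguide}. For the Duhamel term, I would invoke the inhomogeneous waveguide Strichartz estimate (a standard $TT^*$ consequence of Proposition \ref{Strichartzwaveguide}, combined with the Christ–Kiselev lemma in the non-endpoint regime) to bound
\[
\Bigl\| \int_0^t e^{i(t-s)\Delta_x} F(s) \, ds \Bigr\|_{X} \;\leq\; c_0 \, \|F\|_{L^{\tilde q'}_t L^{\tilde r'}_y L^2_z}
\]
for a suitable dual admissible pair $(\tilde q, \tilde r)$.

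The next step is the Hölder estimate for the potential. Using Lorentz-Hölder in the Euclidean variable $y$ (matched so that $\frac{1}{\tilde r'} = \frac{2}{d} + \frac{1}{r}$) together with the multiplicative action of $V$ on the $L^2_z$ direction, I would establish
\[
\| V u \|_{L^{\tilde q'}_t L^{\tilde r'}_y L^2_z} \;\lesssim\; \| V \|_{L^{d/2,\infty}_y L^2_z} \, \| u \|_{L^{q}_t L^{r}_y L^2_z}.
\]
Substituting $u = e^{is(\Delta_x - V)} u_0$ back into the Duhamel bound then yields
\[
\bigl\| e^{it(\Delta_x - V)} u_0 \bigr\|_X \;\leq\; c_0 \|u_0\|_{L^2_{y,z}} \;+\; c_0 \|V\|_{L^{d/2,\infty}_y L^2_z} \bigl\| e^{it(\Delta_x - V)} u_0 \bigr\|_X,
\]
and under the smallness condition $c_0 \|V\|_{L^{d/2,\infty}_y L^2_z} < 1$ I would absorb the second term on the left, giving exactly the claimed geometric-series constant $\frac{c_0}{1 - c_0 \|V\|}$.

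The main obstacle I expect is the Hölder step: one must select the dual pair $(\tilde q, \tilde r)$ so that the Lorentz-Hölder matching in $y$ closes on the $m$-admissible exponent $r$, and carefully combine it with the $L^2_z$ action of $V$ as a multiplier. Once the estimate is proved for one such carefully chosen pair (typically at the $m$-endpoint, invoking Keel–Tao), it extends to all $m$-admissible pairs by the same $TT^*$/duality machinery. A secondary technical point is ensuring a priori finiteness of $\|e^{it(\Delta_x - V)} u_0\|_X$ so that the absorption step is legitimate; this is handled routinely by truncating to finite time intervals, running the bootstrap there uniformly in the length, and passing to the limit.
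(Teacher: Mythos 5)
Your proposal is correct and is precisely the perturbative Duhamel-plus-H\"older argument that the paper itself sketches and then omits (deferring to Theorem 2.1 of Hong), so there is nothing methodologically different here. One small caveat: for the Lorentz--H\"older step to close on $m$-admissible pairs the matching condition should read $\frac{1}{\tilde{r}'}=\frac{2}{m}+\frac{1}{r}$, so the natural norm on $V$ is $L^{m/2,\infty}_y L^2_z$; the $\frac{2}{d}$ in your matching (and the $L^{d/2,\infty}_y$ in the lemma's statement) reflects an inconsistency already present in the paper, which uses $m/2$ in Theorem \ref{thm2} and Proposition \ref{key}.
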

As in the Euclidean case, to finish the proof of Lemma \ref{single}, recall the Strichartz estimates in the waveguide setting as follows (see Proposition 2.1 in \cite{TV1}, in fact, this result is more general since it concerns the compact Riemannian manifold case).
\begin{proposition}\label{Strichartzwaveguide}
For every $n \geq 1$ and for every compact Riemannian manifold $M^k_y$, one considers functions $f(x,y), F(x,y)$ on $\mathbb{R}^n \times M^k_y$, then the
following estimate holds:  
\begin{equation}
\|e^{it\Delta_{x,y}}f\|_{L^p_t L^q_x L^2_y}+\big\|\int_0^t e^{i(t-s)\Delta_{x,y}}F(s,x,y)ds\big\|_{L^p_t L^q_x L^2_y}
\lesssim \|f\|_{L^2_{x,y}}+\|F\|_{L^{\tilde{p}^{'}}_{t}L^{\tilde{q}^{'}}_xL_y^2}, 
\end{equation}
where $(p,q)$ and $(\tilde{p},\tilde{q})$ are Strichartz admissible pairs.
\end{proposition}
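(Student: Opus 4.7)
The plan is to exploit the product structure of the Laplacian: since $\Delta_{x,y}=\Delta_x+\Delta_y$ and the two summands act on disjoint variables, the propagators commute and factor as $e^{it\Delta_{x,y}}=e^{it\Delta_x}\circ e^{it\Delta_y}$, with $e^{it\Delta_y}$ a unitary on $L^2_y$ (for each fixed $x$). Define the auxiliary function $\psi(t,x,y):=\bigl(e^{it\Delta_x}f(\cdot,y)\bigr)(x)$, which for each fixed $y$ is the ordinary Euclidean Schr\"odinger evolution on $\mathbb{R}^n$ with initial datum $f(\cdot,y)$. Writing $u=e^{it\Delta_{x,y}}f$ as $e^{it\Delta_y}\psi(t,x,\cdot)$ and using $L^2_y$-isometry gives $\|u(t,x,\cdot)\|_{L^2_y}=\|\psi(t,x,\cdot)\|_{L^2_y}$ pointwise in $(t,x)$, so the left-hand side of the proposition is $\|\psi\|_{L^p_tL^q_xL^2_y}$.

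Next, I would use Minkowski's integral inequality to push the $L^2_y$ norm all the way to the outside. Since $(p,q)$ is admissible one has $p,q\geq 2$, and the standard inequality $\|g\|_{L^a(L^b)}\leq\|g\|_{L^b(L^a)}$ when $a\geq b$ can be applied twice to obtain $\|\psi\|_{L^p_tL^q_xL^2_y}\leq\|\psi\|_{L^2_yL^p_tL^q_x}$. Then, for each fixed $y$, the classical Euclidean Strichartz estimate on $\mathbb{R}^n_x$ yields
\begin{equation*}
\|\psi(\cdot,\cdot,y)\|_{L^p_tL^q_x}\lesssim\|f(\cdot,y)\|_{L^2_x},
\end{equation*}
and taking $L^2_y$ of both sides together with Fubini gives the homogeneous half of the estimate.

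For the inhomogeneous term I would argue in the same fiberwise fashion: since $e^{it\Delta_y}$ commutes with $\partial_t$ and with $e^{i(t-s)\Delta_x}$, the Duhamel integral factors as $e^{it\Delta_y}\int_0^t e^{i(t-s)\Delta_x}\tilde F(s,\cdot,y)\,ds$ for a suitable $y$-rotated forcing $\tilde F$ with $\|\tilde F(s,x,\cdot)\|_{L^2_y}=\|F(s,x,\cdot)\|_{L^2_y}$. The same unitarity-plus-Minkowski reduction then converts the desired bound into the inhomogeneous Euclidean Strichartz estimate applied slice-by-slice in $y$ with the two admissible pairs $(p,q)$ and $(\tilde p,\tilde q)$, which gives the result after integrating out $y$.

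The only genuinely nontrivial input is the known Euclidean Strichartz inequality on $\mathbb{R}^n$; all the rest is a combination of $L^2_y$-unitarity of $e^{it\Delta_y}$ on the compact manifold $M^k$ (which needs no spectral information beyond self-adjointness of $\Delta_y$) and Minkowski's inequality. I do not expect a real obstacle — the manifold variable contributes no dispersion and is handled purely by $L^2$-conservation, which is exactly why the estimate lives in $L^p_tL^q_xL^2_y$ rather than in a mixed norm with dispersive exponents in $y$. The main place that requires care is bookkeeping the two admissible pairs in the inhomogeneous part and checking that the Minkowski swaps are justified, both of which follow from admissibility forcing all relevant exponents to be $\geq 2$.
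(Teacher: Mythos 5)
Your proof is correct and is essentially the argument behind the cited result: the paper does not prove Proposition \ref{Strichartzwaveguide} itself but quotes it from Tzvetkov--Visciglia \cite{TV1}, whose proof expands in eigenfunctions of $\Delta_y$ on $M^k_y$ and rests on exactly the same three ingredients you use --- no dispersion in $y$, only $L^2_y$-unitarity of the compact-direction flow, Minkowski swaps (justified because admissibility gives $p,q\ge 2$ on the homogeneous side and $\tilde{p}^{'},\tilde{q}^{'}\le 2$ on the dual side), and fiberwise Euclidean Strichartz. Your formulation via the factorization $e^{it\Delta_{x,y}}=e^{it\Delta_x}e^{it\Delta_y}$ is a clean equivalent of their spectral decomposition, so this is the same approach in different clothing.
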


\begin{proof}[Proof of Lemma \ref{single}]
 Since the potential is small, the proof is purely perturbative. One can just use waveguide Strichartz estimate Proposition 
 \ref{Strichartzwaveguide} to treat the potential as a perturbation term provided the potential is small (the Duhamel's formula
and the H\"older inequality are also used). Thus we omit the proof. See Theorem 2.1 in \cite{hong2017strichartz} for the Euclidean analogue.
\end{proof}
Next, we discuss Strichartz estimates with frozen spatial variables. (See Proposition 1 in \cite{hong2017strichartz} and Theorem 3.1 in \cite{chen2017global} for the Euclidean analogue.) The difference is that: now we fix both of the tori component of a certain particle and the other particles by using $L^2$-norm. In other words, the `frozen spatial variables' are the tori component and the other particles. Standard dispersive estimate and an important lemma in \cite{keel1998endpoint} which `lifts' dispersive estimates to Strichartz estimates are used.
\begin{proposition}\label{Stri}
Let $m \geq 3$ and $n \geq 1$. Then for any
$m
$-dimensional admissible pair $(q,r),(\tilde{q},\tilde{r})$ and $1 \leq  \alpha \leq N$, we have   
\begin{equation}
\| e^{it \Delta_x} u_0 \|_{L^q_tL^{r,2}_{y_{\alpha}}L^2_{z_{\alpha}}L^2_{\hat{x}_{\alpha}}} \lesssim \|u_0\|_{L^2_x},    
\end{equation}
\begin{equation}
\| \int_{\mathbb{R}} e^{-is\Delta_x} F(s)ds \|_{L^2_x} \lesssim \|F\|_{L^{\tilde{q}^{'}}_tL^{\tilde{r}^{'},2}_{y_{\alpha}}L^2_{z_{\alpha}}L^2_{\hat{x}_{\alpha}}},    
\end{equation}
and
\begin{equation}
\| \int_{0}^{t} e^{-i(t-s)\Delta_x} F(s)ds \|_{L^q_tL^{r,2}_{y_{\alpha}}L^2_{z_{\alpha}}L^2_{\hat{x}_{\alpha}}} \lesssim \|F\|_{L^{\tilde{q}^{'}}_tL^{\tilde{r}^{'},2}_{y_{\alpha}}L^2_{z_{\alpha}}L^2_{\hat{x}_{\alpha}}},    
\end{equation}
where 
\begin{equation}
   \hat{x}_{\alpha}=(x_1,...x_{\alpha-1},x_{\alpha+1}...,x_N) \in \mathbb{R}^{d(N-1)},
\end{equation}
and $x_{\alpha}$ is the $\alpha$-th variable with Euclidean component $y_{\alpha}$ and tori component $z_{\alpha}$, i.e.,
\begin{equation}
   x_{\alpha}=(y_{\alpha},z_{\alpha})\in \mathbb{R}^m\times \mathbb{T}^n.
\end{equation}
\end{proposition}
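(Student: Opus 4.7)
The plan is to reduce the problem to the abstract Strichartz machinery of Keel--Tao \cite{keel1998endpoint} applied to a vector-valued propagator on $\mathbb{R}^m$. Since $\Delta_x = \Delta_{y_\alpha} + \Delta_{z_\alpha} + \Delta_{\hat{x}_\alpha}$ and the three Laplacians commute, we may factorize
\begin{equation*}
e^{it\Delta_x} = e^{it\Delta_{y_\alpha}}\,e^{it\Delta_{z_\alpha}}\,e^{it\Delta_{\hat{x}_\alpha}}.
\end{equation*}
The operators $e^{it\Delta_{z_\alpha}}$ on $\mathbb{T}^n_{z_\alpha}$ and $e^{it\Delta_{\hat{x}_\alpha}}$ on $(\mathbb{R}^m\times\mathbb{T}^n)^{N-1}_{\hat{x}_\alpha}$ are unitary on $L^2$. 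Thus if we regard $u_0$ as an $L^2_{z_\alpha,\hat{x}_\alpha}$-valued function of $y_\alpha \in \mathbb{R}^m$ and set
\begin{equation*}
U(t)f(y_\alpha) := e^{it\Delta_{y_\alpha}}\bigl[e^{it\Delta_{z_\alpha}}e^{it\Delta_{\hat{x}_\alpha}}f\bigr](y_\alpha),
\end{equation*}
then $U(t)$ is a one-parameter family of isometries on the Hilbert space $\mathcal{H}:=L^2_{y_\alpha}(\mathbb{R}^m;L^2_{z_\alpha,\hat{x}_\alpha})=L^2_x$.

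Next I would establish the dispersive estimate for $U(t)U(s)^*$ viewed as a map from $L^1_{y_\alpha} L^2_{z_\alpha,\hat{x}_\alpha}$ to $L^\infty_{y_\alpha} L^2_{z_\alpha,\hat{x}_\alpha}$. Since the $z_\alpha$- and $\hat{x}_\alpha$-propagators are unitary, $U(t)U(s)^* = e^{i(t-s)\Delta_{y_\alpha}}\,e^{i(t-s)\Delta_{z_\alpha}}\,e^{i(t-s)\Delta_{\hat{x}_\alpha}}$, and the last two factors are harmless on $L^2_{z_\alpha,\hat{x}_\alpha}$. For the $y_\alpha$-factor one has the pointwise kernel representation
\begin{equation*}
(e^{i(t-s)\Delta_{y_\alpha}}g)(y_\alpha) = c_m (t-s)^{-m/2} \int_{\mathbb{R}^m} e^{i|y_\alpha-y'|^2/4(t-s)} g(y')\,dy',
\end{equation*}
so Minkowski's integral inequality (taking $L^2_{z_\alpha,\hat{x}_\alpha}$ inside the $y'$-integral) yields
\begin{equation*}
\bigl\|U(t)U(s)^* g\bigr\|_{L^\infty_{y_\alpha} L^2_{z_\alpha,\hat{x}_\alpha}} \lesssim |t-s|^{-m/2}\,\|g\|_{L^1_{y_\alpha} L^2_{z_\alpha,\hat{x}_\alpha}}.
\end{equation*}
Combined with the trivial $L^2\to L^2$ bound, this is exactly the pair of hypotheses required by the abstract Keel--Tao framework in \cite{keel1998endpoint}, with ``dispersion exponent'' $\sigma=m/2$ and the $L^2$-valued space $L^2_{z_\alpha,\hat{x}_\alpha}$ playing the role of the target Hilbert space.

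Invoking that abstract lemma then produces, for every $m$-admissible pair $(q,r)$ and $(\tilde q,\tilde r)$, the homogeneous, dual, and retarded Strichartz estimates with $L^q_t L^{r,2}_{y_\alpha} L^2_{z_\alpha,\hat{x}_\alpha}$ on the left (the Lorentz refinement $L^{r,2}$ is precisely what Keel--Tao's argument delivers, and is in particular needed at the endpoint $r=2m/(m-2)$, which requires $m\geq 3$), and $L^{\tilde q'}_t L^{\tilde r',2}_{y_\alpha} L^2_{z_\alpha,\hat{x}_\alpha}$ on the right for the inhomogeneous versions. This is exactly the content of Proposition \ref{Stri}. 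The main subtle point I expect is the bookkeeping in the vector-valued dispersive estimate: one must carefully apply Minkowski before using the $L^2$-isometry properties of $e^{it\Delta_{z_\alpha}}$ and $e^{it\Delta_{\hat{x}_\alpha}}$, since only then do the tori propagator and the $(N-1)$-particle propagator disappear cleanly and leave the correct $|t-s|^{-m/2}$ dispersive decay coming solely from the $m$-dimensional Euclidean factor. Everything else is a direct application of the abstract scheme, mirroring Proposition 1 of \cite{hong2017strichartz} in the purely Euclidean case.
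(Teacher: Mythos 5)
Your proposal is correct and follows essentially the same route as the paper: both proofs exploit unitarity of the propagators in the $z_\alpha$- and $\hat{x}_\alpha$-variables to reduce to the free $m$-dimensional flow in $y_\alpha$, derive the vector-valued dispersive bound $|t-s|^{-m/2}$ via the kernel representation and Minkowski's inequality, and then invoke the abstract Keel--Tao machinery (Theorem 10.1 of \cite{keel1998endpoint}) to obtain the homogeneous, dual, and retarded Lorentz-refined estimates. Your phrasing in terms of an $L^2_{z_\alpha,\hat{x}_\alpha}$-valued propagator $U(t)$ is just a cleaner packaging of the same argument.
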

\begin{proof}
We consider a complex-valued function $f(x): \mathbb{R}^{dN}_x \rightarrow \mathbb{C}$
in $L^{r,2}_{y_{\alpha}}L^2_{z_{\alpha}}L^2_{\hat{x}_{\alpha}}$ with the function-valued function $f(y_{\alpha};z_{\alpha},\hat{x}_{\alpha})$ in $L^{r,2}_{y_{\alpha}}$. We note that $r \geq 2$. Using unitarity property,
\begin{equation}
\| e^{it \Delta_x} u_0 \|_{L^{r}_{y_{\alpha}}L^2_{z_{\alpha}}L^2_{\hat{x}_{\alpha}}}=  \| e^{it \Delta_{y_{\alpha}}} u_0 \|_{L^{r}_{y_{\alpha}}L^2_{z_{\alpha}}L^2_{\hat{x}_{\alpha}}}=\| e^{it \Delta_{y_{\alpha}}} u_0 \|_{L^2_{z_{\alpha}}L^2_{\hat{x}_{\alpha}}L^{r}_{y_{\alpha}}}.  
\end{equation}
Then, by the standard dispersive estimate (for $y_{\alpha}$-direction which is $m$-dimensional)
\begin{equation}
\| e^{it \Delta_{y_{\alpha}}} u_0 \|_{L^{r}_{y_{\alpha}}} \lesssim \frac{1}{|t|^{m(\frac{1}{2}-\frac{1}{r})}} \|f\|_{L^{r^{'}}_{y_{\alpha}}},    
\end{equation}
we obtain (the Minkowski allows one change the order of norms)
\begin{equation}
 \| e^{it \Delta_x} u_0 \|_{L^{r}_{y_{\alpha}}L^2_{z_{\alpha}}L^2_{\hat{x}_{\alpha}}} \lesssim  \frac{1}{|t|^{m(\frac{1}{2}-\frac{1}{r})}} \| u_0 \|_{L^2_{z_{\alpha}}L^2_{\hat{x}_{\alpha}}L^{r^{'}}_{y_{\alpha}}} \lesssim  \frac{1}{|t|^{m(\frac{1}{2}-\frac{1}{r})}} \|  u_0 \|_{L^{r^{'}}_{y_{\alpha}}L^2_{z_{\alpha}}L^2_{\hat{x}_{\alpha}}}.
\end{equation}
The proposition follows from Theorem 10.1 in \cite{keel1998endpoint}.
\end{proof}
Now we briefly discuss the function spaces and corresponding estimates. They will be essentially used in the following two sections. As mentioned in the end of Section 3.3 in \cite{hong2017strichartz},  Strichartz estimates with frozen spatial variables are still not sufficient to complete the proof of Theorem \ref{thm1} (Strichartz estimate) because of the interacting potentials. That is why a space-time norm that plays the
role of the rotated space-time norm is needed. This part is almost the same as Section 4.1 in \cite{hong2017strichartz} with natural modifications. We also refer to \cite{HTT1,HTT2,koch2007priori} for more details. 

We note that the definitions and properties in Subsection 4.1. of \cite{hong2017strichartz} are general enough which can be applied for our model in the waveguide setting naturally. They construct function spaces with nice properties for a separable Hilbert space $H$ and self-adjoint operator $S$. In this paper, we can just choose $H$ to be $L^2_x$
and $S$ to be $\Delta_{x}$ in the waveguide setting, where $x=(x_1,...,x_N)$ and $x_{\alpha}\in \mathbb{R}^m \times \mathbb{T}^n$ for $\alpha \in \{1,...,N\}$ as in \eqref{maineq}. Then the definitions and associated properties for our case will hold as well. Thus we refer to Subsection 4.1. of \cite{hong2017strichartz} for the function spaces and corresponding estimates/properties. For instance, we will use the following property of $V^p_{\Delta}$-space. (It follows from the definition. See Proposition 2 in \cite{hong2017strichartz}.)
\begin{equation}
\|\mathbf{1}_{[0,\infty)} e^{it\Delta_x} u_0\|_{V^p_{\Delta_x}}= \|u_0\|_{L^2_x}.   
\end{equation}
Moreover, the duality, the inclusion properties and the transference principle of $V^p_{\Delta}$-space are also often used. (See Subsection 4.1. of \cite{hong2017strichartz}) 
\section{The proof of Theorem \ref{thm1}}\label{3}
In this section, we discuss the proof of Theorem \ref{thm1} (Strichartz estimate). Corollary \ref{cor1} will also be obtained using the properties of function space $V^p_{\Delta}$. Like in \cite{hong2017strichartz}, we handle the potential terms by treating them as perturbations. The key estimate is as follows,
\begin{proposition}\label{key}
Let $m \geq 3$, $n \geq 1$ and $1<p<2$. Consider $u$ in the waveguide setting as in Theorem \ref{thm1}. Then, we have
\begin{equation}
\big\| \mathbf{1}_{[0,+\infty)} \int_0^t e^{i(t-s)\Delta_x}(V(x_{\alpha}-x_{\beta})u(s))  ds  \big\|_{V^p_{\Delta}} \leq C \|V\|_{L_y^{\frac{m}{2},\infty}L^2_z} \|u\|_{V^p_{\Delta}} ,
\end{equation}
where $C$ is for the universal constant.
\end{proposition}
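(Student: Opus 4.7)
The plan is to combine the $V^p_{\Delta_x}$--$U^{p'}_{\Delta_x}$ duality, the rotation $\mathcal{R}_{\alpha\beta}$ (which converts the pair interaction into a one--particle potential), and the frozen--variable Strichartz estimate of Proposition~\ref{Stri}. First I would invoke the duality recalled in Subsection~4.1 of \cite{hong2017strichartz}: the $V^p_{\Delta_x}$--norm of the Duhamel integral equals, up to a constant, the supremum of
\[
\mathcal{B}(u,\phi) \;=\; \int_0^{\infty}\!\!\int V(x_\alpha - x_\beta)\,u(s,x)\,\overline{\phi(s,x)}\,dx\,ds
\]
over test functions $\phi(s) = e^{is\Delta_x}\psi(s)$ of unit norm in the predual, obtained by pulling $e^{i(t-s)\Delta_x}$ out of the Duhamel formula and onto $\phi$.

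Next, I would apply $\mathcal{R}_{\alpha\beta}$ to reduce to a one--particle estimate. On the Euclidean factors, $\mathcal{R}_{\alpha\beta}$ is the $45^{\circ}$ rotation in the $(y_\alpha,y_\beta)$--plane, which is an $L^2$--isometry commuting with $\Delta_{y_\alpha}+\Delta_{y_\beta}$, and hence with $\Delta_x$. The torus coordinates $(z_\alpha,z_\beta)$ cannot literally be rotated, but they enter only through the $L^2_z$--slot of $V$ and the $L^2_{z_\alpha}$--slot of Proposition~\ref{Stri}, so they are carried along as a passive $L^2$--spectator via Fubini and Cauchy--Schwarz. After the change of variables, writing $x'_\alpha$ for the new (relative) coordinate of the $\alpha$th particle, the bilinear form becomes
\[
\mathcal{B}(u,\phi) \;=\; \int_0^{\infty}\!\!\int V\bigl(\sqrt{2}\,x'_\alpha\bigr)\,u^R(s,x')\,\overline{\phi^R(s,x')}\,dx'\,ds,
\]
with $\|u^R\|_{V^p_{\Delta_x}} = \|u\|_{V^p_{\Delta_x}}$ and analogously for $\phi^R$.

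I would then estimate $\mathcal{B}(u^R,\phi^R)$ by Hölder in Lorentz spaces in the Euclidean coordinate $y'_\alpha$ with exponents $\tfrac{2}{m}+\tfrac{1}{r}+\tfrac{1}{r}=1$, forcing the endpoint $r=\tfrac{2m}{m-2}$, together with Hölder in time giving $q=2$; the pair $(q,r)=(2,\tfrac{2m}{m-2})$ is then $m$--admissible. Cauchy--Schwarz in the remaining $z'_\alpha$ and $\hat{x}'_\alpha$ slots yields
\[
\mathcal{B}(u^R,\phi^R) \;\lesssim\; \|V\|_{L^{m/2,\infty}_{y}L^2_z}\,\|u^R\|_{L^2_tL^{r,2}_{y'_\alpha}L^2_{z'_\alpha}L^2_{\hat{x}'_\alpha}}\,\|\phi^R\|_{L^2_tL^{r,2}_{y'_\alpha}L^2_{z'_\alpha}L^2_{\hat{x}'_\alpha}}.
\]
Proposition~\ref{Stri}, together with the transference principle of Subsection~4.1 in \cite{hong2017strichartz}, lifts the last two factors to $\|u\|_{V^p_{\Delta_x}}$ and $\|\phi\|_{U^{p'}_{\Delta_x}}\lesssim 1$, which gives the claimed bound.

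The step I expect to be most delicate is the rotation in the mixed Euclidean--torus geometry and its compatibility with the variation--space machinery: one must check that performing the $45^{\circ}$ rotation only on the Euclidean factors genuinely preserves both the $V^p_{\Delta_x}$ and $U^{p'}_{\Delta_x}$ norms (via commutation with $\Delta_x$), and that the $L^2_z$--spectator structure survives Cauchy--Schwarz cleanly against the mixed--Lorentz norm on $V$. Once this is granted, the remainder is a Lorentz--Hölder estimate in the Euclidean direction, which is essentially the Euclidean argument of \cite{hong2017strichartz} with an additional passive $L^2_z$ slot carried throughout.
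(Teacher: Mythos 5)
Your overall architecture --- duality against the predual, the rotation $\mathcal{R}_{\alpha\beta}$ to turn $V(x_\alpha-x_\beta)$ into a one--particle potential $V(\sqrt2\,x'_\alpha)$, and then endpoint Lorentz--H\"older ($r=\tfrac{2m}{m-2}$, $q=2$) against the frozen--variable Strichartz estimate of Proposition~\ref{Stri} --- is exactly the paper's strategy, and your handling of the torus directions as passive $L^2$ spectators is the right reading of the rotation in the mixed geometry. However, there is a genuine gap in your last step: you claim that the transference principle lifts $\|\phi^R\|_{L^2_tL^{r,2}_{y'_\alpha}L^2_{z'_\alpha}L^2_{\hat x'_\alpha}}$ to $\|\phi\|_{U^{p'}}\lesssim1$. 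This fails. Transference of an $L^q_t$--based Strichartz bound to $U^a$ requires $a\le q$; here the time exponent is the endpoint $q=2$ while $a=p'>2$ (forced by $1<p<2$). Concretely, a $U^{p'}$--atom with $K$ steps each of $L^2_x$--size $K^{-1/p'}$ has $L^2_tL^{r}$--norm of order $K^{1/2-1/p'}\to\infty$, so $U^{p'}\not\hookrightarrow L^2_tL^{r,2}_{y'_\alpha}L^2_{z'_\alpha}L^2_{\hat x'_\alpha}$ and your final inequality is false as stated.

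The paper's proof avoids this precisely by \emph{not} estimating the test function globally in $L^2_t$. It reduces to a single $U^{p'}$--atom $a=\sum_{k}\mathbf{1}_{(s_{k-1},s_k)}\phi_{k-1}$, writes the pairing as $\sum_k\int\langle e^{is\Delta_x}\mathcal{R}\phi_{k-1},\mathbf{1}_{[-s_k,-s_{k-1}]}\mathcal{R}F(s)\rangle\,ds$, and for each $k$ applies Strichartz to the \emph{free evolution of the single piece} $\phi_{k-1}$ (costing only $\|\phi_{k-1}\|_{L^2_x}$), while the time cutoff $\mathbf{1}_{[-s_k,-s_{k-1}]}$ is placed on the $u$--factor. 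The sum over $k$ is then closed by $\ell^{p'}$--$\ell^{p}$ H\"older, using the atom normalization $\bigl(\sum_k\|\phi_{k-1}\|_{L^2_x}^{p'}\bigr)^{1/p'}\le1$ together with the interval subadditivity
\begin{equation*}
\Bigl(\sum_{k}\|\mathbf{1}_{[-s_k,-s_{k-1})}u\|_{V^p_{\Delta_x}}^{p}\Bigr)^{1/p}\le\|u\|_{V^p_{\Delta_x}},
\end{equation*}
and the transference principle applied to each time--localized piece $\mathbf{1}_{[-s_k,-s_{k-1}]}u$ (which is legitimate, since for $u$ one transfers \emph{from} $V^p$ with $p<2\le q$). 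This is the step where the hypothesis $1<p<2$ actually enters; in your write--up that hypothesis plays no visible role, which is the symptom of the gap. The rest of your argument (rotation invariance of the $V^p_{\Delta_x}$--norm, the Lorentz--H\"older with $\tfrac2m+\tfrac{m-2}{2m}+\tfrac{m-2}{2m}=1$, and the use of Proposition~\ref{Stri}) matches the paper and is fine.
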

\begin{remark}
  Proposition \ref{key} indicates that one can regard the potential terms as perturbations. As we can see from the proof below, it suffices to consider one arbitrary interacting potential $V(x_{\alpha}-x_{\beta})$ since the $V^p_{\Delta}$-norm is rotation-flexible.
\end{remark}
\begin{remark}
  See Proposition 4 in \cite{hong2017strichartz} for the Euclidean analogue. The main new difference for the waveguide case is the appearance of the tori component. 
\end{remark}
\begin{proof}

For notational convenience, we denote
\begin{equation}
    w=\textbf{1}_{[0,\infty)} \int_0^t e^{-is\Delta_{x}}(F(s))ds,
\end{equation}
where $F=V(x_{\alpha}-x_{\beta})u(s)$ is treated as the forcing term (or say a perturbative term).

We will estimate $w$ by the duality argument. Since we only expect $w \in V^p_{-}$, not $w \in V^p$, we consider $\tilde{w}(t)=w(-t)$.

Similar to Proposition 4 of \cite{hong2017strichartz}, using duality, it suffices to show that
\begin{equation}
    \sum_{j=1}^{J}\langle a(t_{j-1}),\tilde{w}(j)-\tilde{w}(t_{j-1}) \rangle_{L^2_x}  \lesssim \|V\|_{L_y^{\frac{m}{2},\infty}L^2_z} \|u\|_{V^p_{\Delta}}
\end{equation}
for any fine partition of unity $t=\{t_j\}_{j=0}^J$ and any $U^{p^{'}}$-atom $a(t)=\sum_{k=1}^{K} \textbf{1}_{(s_{k-1},s_k)}\phi_{k-1}$. (We note that the $U^{p^{'}}$-space is the dual of the $V^p_{\Delta}$-space.)

Doing some standard simplifications as in Proposition 4 of \cite{hong2017strichartz} (expanding atoms $a$ in terms of $\phi_k$), one can get a simpler sum
\begin{equation}
    \sum_{k=1}^{K}\langle \phi_{k-1}, \tilde{w}(s_k)-\tilde{w}(s_{k-1})\rangle_{L^2_x}.
\end{equation}
We further write it as
\begin{align}
&\sum_{k=1}^{K}\langle \phi_{k-1}, \tilde{w}(s_k)-\tilde{w}(s_{k-1})\rangle_{L^2_x} \\
&=  -\sum_{k=1}^{K} \int_{-s_k}^{-s_{k-1}} \langle \phi_{k-1},e^{-is\Delta_x}(F(s)) \rangle_{L^2_x} ds \\
&=  -\sum_{k=1}^{K} \int_{-s_k}^{-s_{k-1}} \langle e^{is\Delta_x}\mathcal{R} \phi_{k-1},\mathcal{R}(F(s)) \rangle_{L^2_x} ds \\
&=  -\sum_{k=1}^{K} \int_{\mathbb{R}} \langle e^{is\Delta_x}\mathcal{R} \phi_{k-1}, \textbf{1}_{[-s_k,-s_{k-1}]} \mathcal{R}(F(s)) \rangle_{L^2_x} ds,
\end{align}
where $\mathcal{R}$ denotes any rotation operator. (It is just $\mathcal{R}_{\alpha \beta}$ for interacting potential $V(x_{\alpha}-x_{\beta})$.) We want to control it by $\|V\|_{L_y^{\frac{m}{2},\infty}L^2_z} \|u\|_{V^p_{\Delta}}$.

Then, applying the H\"older inequality, the Strichartz estimate (Proposition \ref{Stri}) and the transference principle (Lemma \ref{trans}), we estimate it by
\begin{align}
& \sum_{k=1}^{K}\langle \phi_{k-1}, \tilde{w}(s_k)-\tilde{w}(s_{k-1})\rangle_{L^2_x} \\
&\lesssim \sum_{k=1}^{K} \|e^{it\Delta}\mathcal{R} \phi_{k-1}\|_{L^2_tL^{\frac{2m}{m-2},2}_{y_{\alpha}}L^2_{z_{\alpha}}L^2_{\hat{x}_{\alpha}}}\|\textbf{1}_{[-s_k,-s_{k-1}]} \mathcal{R}(F(s))\|_{L^2_tL^{\frac{2m}{m+2},2}_{y_{\alpha}}L^2_{z_{\alpha}}L^2_{\hat{x}_{\alpha}}} \\
&\lesssim \sum_{k=1}^{K} \|\phi_{k-1}\|_{L^2_x}  \|V\|_{L_y^{\frac{m}{2},\infty}L^2_z} \|\textbf{1}_{[-s_k,-s_{k-1}]} \mathcal{R}(u)\|_{L^2_tL^{\frac{2m}{m-2},2}_{y_{\alpha}}L^2_{z_{\alpha}}L^2_{\hat{x}_{\alpha}}} \\
&\lesssim \sum_{k=1}^{K} \|\phi_{k-1}\|_{L^2_x}  \|V\|_{L_y^{\frac{m}{2},\infty}L^2_z} \|\textbf{1}_{[-s_k,-s_{k-1}]} (u)\|_{V^p_{\Delta_x}} \\
&\lesssim \|V\|_{L_y^{\frac{m}{2},\infty}L^2_z} \big\| \|\phi_{k-1}\|_{L^2_x} \big\|_{l^{p^{'}}} \cdot \big\| \|\textbf{1}_{[-s_k,-s_{k-1}]} (u)\|_{V^p_{\Delta_x}} \big\|_{l^{p}} \\
&\lesssim \|V\|_{L_y^{\frac{m}{2},\infty}L^2_z} \big\| \|\textbf{1}_{[-s_k,-s_{k-1}]} (u)\|_{V^p_{\Delta_x}} \big\|_{l^{p}}.
\end{align}
We note that we have used the inclusion property of discrete $L^p$ spaces (i.e. $l^{p}$-spaces). ($1<p<2$ implies $p^{'}>2$.)

To close the argument, now it remains to show that
\begin{equation}
\big\| \|\textbf{1}_{[-s_k,-s_{k-1}]} (u)\|_{V^p_{\Delta_x}} \big\|_{l^{p}}=\big\{ \sum_{k=1}^{K} \|\mathbf{1}_{[-s_k,-s_{k-1})} u\|^{p}_{V^p_{\Delta_x}} \big\}^{\frac{1}{p}} \leq \|u\|_{V^p_{\Delta_x}}.    
\end{equation}
This estimate follows exactly as the Euclidean case (using the definition of $V^p_{\Delta_x}$). There is no difference in the waveguide setting. Thus the proof of Proposition \ref{key} is complete.
\end{proof}
With the help of Proposition \ref{key}, we give the proof of Theorem \ref{thm1} as follows. We can now treat the potential terms as perturbations.
\begin{proof}
 Applying Proposition \ref{key} to the Duhamel formula for $u=e^{-itH_N} u_0$, we have,   
 \begin{equation}
   \|\mathbf{1}_{[0,+\infty)} u(t)\|_{V^p_{\Delta}}  \leq \|u_0\|_{L^2_x}+\frac{N(N-1)}{2}C \|V\|_{L_y^{\frac{d}{2},\infty}L^2_z} \|u\|_{V^p_{\Delta}}.
 \end{equation}
Theorem \ref{thm1} now follows from the smallness assumption of potential $V$. ($\frac{N(N-1)}{2}$ is the number of interacting potentials.)
\end{proof}
Corollary \ref{cor1} follows from Theorem \ref{thm1} in viewing of the following lemma:
\begin{lemma}[Transference principle]\label{trans}
Let $d \geq 1$, $1<p<2$, $q\geq 2$ and $X$ be a Banach space. If a function $u: \mathbb{R} \rightarrow X$ satisfies the bound 
\begin{equation}
\|e^{\it\Delta_x}u_0\|_{L^q_t X} \lesssim \|u_0\|_{L^2_x},   
\end{equation}
then
\begin{equation}
 \|u\|_{L^q_t X} \lesssim \|u\|_{V^p_{\Delta_x}}.    
\end{equation}
\end{lemma}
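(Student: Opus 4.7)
The plan is to first establish the stronger inequality $\|u\|_{L^q_t X} \lesssim \|u\|_{U^q_{\Delta_x}}$ with the adapted $U^q$-norm on the right, and then invoke the standard continuous embedding $V^p_{\Delta_x} \hookrightarrow U^q_{\Delta_x}$ (valid whenever $1 \leq p < q$; see Subsection 4.1 of \cite{hong2017strichartz} and \cite{koch2007priori}). Since the hypothesis $1 < p < 2 \leq q$ forces $p < q$, this embedding is available and reduces the lemma to proving the inequality against the $U^q$-norm.

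First I would verify the bound on a single $U^q_{\Delta_x}$-atom. Such an atom has the form
$$a(t) = \sum_{k=1}^{K} \mathbf{1}_{[t_{k-1},t_k)}(t)\, e^{it\Delta_x}\phi_{k-1},$$
for some finite increasing partition $\{t_k\}_{k=0}^{K}$ of $\mathbb{R}$ and functions $\phi_{k-1} \in L^2_x$ normalized by $\sum_{k=1}^{K}\|\phi_{k-1}\|_{L^2_x}^q \leq 1$. Using the disjointness of the intervals $[t_{k-1},t_k)$ and the hypothesized Strichartz-type bound applied to each $\phi_{k-1}$,
$$\|a\|_{L^q_t X}^q = \sum_{k=1}^{K}\int_{t_{k-1}}^{t_k}\|e^{it\Delta_x}\phi_{k-1}\|_X^q\,dt \leq \sum_{k=1}^{K}\|e^{it\Delta_x}\phi_{k-1}\|_{L^q_t X}^q \lesssim \sum_{k=1}^{K}\|\phi_{k-1}\|_{L^2_x}^q \leq 1.$$

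Second, I would extend the bound to general functions in $U^q_{\Delta_x}$ via the atomic decomposition: any $u$ with finite $U^q_{\Delta_x}$-norm admits a representation $u=\sum_j c_j a_j$ with $U^q_{\Delta_x}$-atoms $a_j$ and $\sum_j |c_j| \lesssim \|u\|_{U^q_{\Delta_x}}$. The triangle inequality in the Bochner space $L^q_t X$ combined with the atomic bound just established then gives $\|u\|_{L^q_t X} \leq \sum_j |c_j|\,\|a_j\|_{L^q_t X} \lesssim \|u\|_{U^q_{\Delta_x}}$. Composing with $V^p_{\Delta_x} \hookrightarrow U^q_{\Delta_x}$ concludes the proof.

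The argument has no real obstacle; it is a routine consequence of the $U^p$/$V^p$ machinery. The only mild subtlety is that $X$ is only assumed to be a Banach space, so each step must rely solely on the triangle inequality in $L^q_t X$ and the linearity of the free flow, avoiding any appeal to orthogonality or Hilbert-space duality. All of the steps above respect this restriction, and the conclusion is insensitive to the waveguide geometry since the reasoning is purely functional-analytic.
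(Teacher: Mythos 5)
Your proof is correct and is essentially the argument the paper points to (it defers to Proposition 3 of \cite{hong2017strichartz}, whose proof is exactly this: verify the bound on $U^q_{\Delta_x}$-atoms using disjointness of the time intervals and the hypothesized free-evolution bound, sum over the atomic decomposition, and conclude via the embedding $V^p_{\Delta_x}\hookrightarrow U^q_{\Delta_x}$ for $p<q$). The only caveat worth keeping in mind is that this embedding requires the right-continuous, suitably normalized version of $V^p_{\Delta_x}$, which is the convention adopted in \cite{hong2017strichartz} and \cite{koch2007priori}.
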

\begin{remark}
 We note that the Bourgain spaces $X^{s,b}$ (also known as Fourier restriction space) enjoy the similar transfer principle (see \cite{Taobook} for more info.). As summarized in \cite{hong2017strichartz}, the Strichartz estimates in the $V^p_{\Delta_x}$ sharpen the bounds in $X^{s,b}$ by $0+$ in that Strichartz estimates in the $X^{s,b}$ space do not cover the endpoint Strichartz estimates, while those in the $V^p_{\Delta_x}$-space do.   
\end{remark}
See Proposition 3 in \cite{hong2017strichartz} for the proof. As a direct consequence, it shows that the 
$V^p_{\Delta_x}$-norm dominates the two Strichartz-type space-time norms in Corollary \ref{cor1}. Thus, Corollary \ref{cor1} follows from Theorem \ref{thm1}.
\section{The proof of Theorem \ref{thm2}}\label{4}
Now we are ready to discuss the proof of Theorem \ref{thm2}, i.e. the scattering for \eqref{maineq}. Since we have established proper Strichartz-type estimate, the proof will follow similarly as in \cite{hong2017strichartz}. For the sake of completeness, we include it as follows.

Without loss of generality, we only consider for the positive time. It suffices to show
that
\begin{equation}
u_{+}=\lim_{t \rightarrow +\infty} e^{-it\Delta_x}e^{-itH_N} u_0    
\end{equation}
exists in $L^2_x$ as $t \rightarrow \infty$. Indeed, by the Duhamel formula
\begin{align}
    &\|e^{-it_2\Delta}e^{-it_2 H_N}u_0-e^{-it_1\Delta}e^{-it_1 H_N}u_0\|_{L^2_x} \\
    &\leq \sum_{1 \leq \alpha<\beta \leq N} \big\| \int_{t_1}^{t_2} e^{-is\Delta_x}((V(x_{\alpha}-x_{\beta}))e^{-is H_N} u_0)ds \big\|_{L^2_x}.
\end{align}
It suffices to consider one single potential. According to Theorem \ref{thm1} and Corollary \ref{cor1}, we have that
\begin{align}
&\big\| \int_{t_1}^{t_2} e^{-is\Delta_x}((V(x_{\alpha}-x_{\beta}))e^{-is H_N} u_0)ds \big\|_{L^2_x} \\
&=\big\| \mathcal{R}_{\alpha \beta} \int_{t_1}^{t_2} e^{-is\Delta_x}((V(x_{\alpha}-x_{\beta}))e^{-is H_N} u_0)ds \big\|_{L^2_x}\\
&=\big\| \int_{t_1}^{t_2} e^{-is\Delta_x}(V(\sqrt{2} x_{\alpha})(\mathcal{R}_{\alpha \beta}e^{-is H_N} u_0))ds \big\|_{L^2_x}\\
&\leq c_0 \big\| V(\sqrt{2} x_{\alpha})(\mathcal{R}_{\alpha \beta}e^{-is H_N} u_0) \big\|_{L^2_{t\in [t_1,t_2]}L^{\frac{2d}{d+2},2}_{y_{\alpha}}L^2_{z_{\alpha}}L^2_{\hat{x}_{\alpha}}}\\
&\leq \frac{c_0}{2} \|V\|_{L_z^{\frac{d}{2},\infty}L^2_z} \big\| (\mathcal{R}_{\alpha \beta}e^{-is H_N} u_0) \big\|_{L^2_{t\in [t_1,t_2]}L^{\frac{2d}{d-2},2}_{y_{\alpha}}L^2_{z_{\alpha}}L^2_{\hat{x}_{\alpha}}} \rightarrow 0
\end{align}
as $t_1,t_2 \rightarrow \infty$. Then we can see that the limit exists.
\section{Further remarks}\label{5}
In this section, we make a few more remarks for many body model \eqref{maineq} and Theorems \ref{thm1}, \ref{thm2} as follows.\vspace{1mm}

1. The main results in this paper and \cite{hong2017strichartz} are based on perturbative scheme which are tightly dependent on the smallness assumption of the potentials. One may consider removing the smallness assumption to prove Strichartz estimates like Theorem \ref{thm1} or Corollary \ref{cor1}. It may be hard to consider the general case thus the two body case may be a good model to start with. (See \cite{chong2021global} for the Euclidean case.)\vspace{1mm}

2. It is also interesting to consider many body equation with a nonlinearity $F(t,x_1,...,x_N)$ and study the long time behavior. There are few general theories and results regarding this topic, especially the scattering-type results. Also, it may be hard to consider the general case thus the two body case may still be a good model to start with. (The $\Lambda$-equation in the Hartree–Fock–Bogoliubov equations is an example for the two body case, though it is in a coupled system which makes it more complicated. See \cite{chong2022global,chong2021global}.) 

We also note that via the standard $T$-$T^{\ast}$ argument and the Christ-Kiselev lemma, one can obtain the inhomogeneous Strichartz analogue of Corollary \ref{cor1} (excluding the double endpoint case). (See \cite{Taobook}.) With the help of it, one may obtain the local well-posedness for \eqref{maineq} with a subcritical nonlinearity in the energy space. We leave it for interested readers.\vspace{1mm}  

3. One may also consider the tori analogue of \eqref{maineq} (replacing $ \mathbb{R}^{m}\times \mathbb{T}^{n}$ by $ \mathbb{T}^{d}$)) and obtain some estimates. The reason we consider the waveguide case is that we intend to study the scattering behavior, which is not expected for the tori case. \vspace{1mm}

4. The results in the current paper is only about the estimates and the PDE-level of many body Schr\"odinger equations. One may consider the many body Schr\"odinger equations in the tori setting or waveguide setting from the perspectives of mathematical physics. (See \cite{chong2019dynamical,grillakis2013pair,grillakis2017pair} for examples.)

 \bibliographystyle{amsplain}
\bibliography{manybody}
\bibliographystyle{plain}

\end{document}